\newtheorem{thm}{Theorem}[section]
\newtheorem{lem}[thm]{Lemma}
\newtheorem{cor}[thm]{Corollary}
\theoremstyle{definition}
\numberwithin{equation}{section}
\let\Lm=\Lambda
\let\abs=\envert
\newcommand{\floor}[1]{\left\lfloor#1\right\rfloor}
\theoremstyle{remark}
\newtheorem{rem}[thm]{Remark}
\begin{document}
\title{On finiteness of odd superperfect numbers}
\author{Tomohiro Yamada}
\keywords{Odd perfect numbers, multiperfect numbers, superperfect number;
the sum of divisors, arithmetic functions, exponential diophantine equations.}
\subjclass{Primary 11A25, Secondary 11A05, 11D61, 11J86.}
\address{Center for Japanese language and culture, Osaka University,
562-8558, 8-1-1, Aomatanihigashi, Minoo, Osaka, Japan}
\email{tyamada1093@gmail.com}

\date{}
\maketitle

\begin{abstract}
Some new results concerning the equation $\sigma(N)=aM, \sigma(M)=bN$ are proved.
As a corollary, there are only finitely many odd superperfect numbers
with a fixed number of distinct prime factors.
\end{abstract}

\section{Introduction}\label{intro}

As usual, $\sigma(N)$ shall denote the sum of positive divisors of a positive integer $N$
and call a positive integer $N$ to be perfect if $\sigma(N)=2N$. Though
it is not known whether or not an odd perfect number
exists, many conditions which must be satisfied by
such a number are known.

Analogous to this notion, Suryanarayana \cite{Sur1} called
$N$ to be superperfect if $\sigma(\sigma(N))=2N$.
Suryanarayana showed that if $N$ is even superperfect, then
$N=2^m$ with $2^{m+1}-1$ prime, and if $N$ is odd superperfect,
then $N$ must be square and have at least two distinct prime factors.

Dandapat, Hunsucker and Pomerance \cite{DHP} showed that
if $\sigma(\sigma(N))=kN$ for some integer $k$ and $\sigma(N)$ is a prime power,
then $N$ is even superperfect or $N=21, k=3$.  Later
Pomerance \cite{Pom1} called $N$ to be super multiply perfect
if $\sigma(\sigma(N))=kN$ for some integer $k$ and showed that
if $p^m\mid\sigma(N)$ and $N\mid\sigma(p^m)$ for some prime power $p^m$,
then $N=2^{n-1}$ or $2^n-1$ with $2^n-1$ prime or $N=15, 21, 1023$.

In the West Coast Number Theory Conference 2005, the author posed the question
whether there exist only finitely many odd integers $N$ such that $N\mid\sigma(\sigma(N))$
and $\omega(\sigma(N))=s$ for each fixed $s$ \cite{Ymd1}, where $\omega(n)$ denotes
the number of distinct prime factors of $n$.  The above-mentioned result of
Dandapat, Hunsucker and Pomerance answers the special case $s=1$ of this question affirmatively.

Concerning the unitary divisor sum $\sigma^*(N)$
($d$ is called a unitary divisor of $N$ if $d\mid N$ and $d, N/d$ are coprime),
the author already proved that $N=9, 165$ are all the odd integers
satisfying $\sigma^*(\sigma^*(N))=2N$ \cite{Ymd2}.

In this paper, although we cannot prove the above-mentioned conjectures, some results are proved.
Before stating our results, we introduce the notation $C_i(\ldots)$ for $i=0, 1, 2, \ldots$,
each of which denotes some effectively computable positive constant depending only on its arguments.

\begin{thm}\label{th11}
If a quadruple of integers $N, M, a, b$ satisfies $\sigma(N)=aM$, $\sigma(M)=bN$ and
$\omega(\sigma(N))\leq k$, then we have $a, b<C_0(k)$ for some effectively computable constant $C_0(k)$
depending only on $k$.
\end{thm}

\begin{thm}\label{th12}
Additionally to the condition described in Theorem \ref{th11}, assume that both $M$ and $N$ are odd.
Then each of $M$ and $N$ must have a prime factor smaller than some effectively
computable constant $C_1(k)$ depending only on $k$.
\end{thm}

\begin{rem}
The additional condition would be necessary.
Indeed, if we allow $M$ or $N$ to be even and take $N=2^m, M=2^{m+1}-1$,
then Theorem 1.2 would imply that there exist only finitely many Mersenne primes,
contary to the widely believed conjecture!
\end{rem}

\begin{thm}\label{th13}
For any given integers $a, b, k, k^\prime$, there are only finitely many pairs of odd integers $M, N$
satisfying $\sigma(N)=aM, \sigma(M)=bN$ with $\omega_1(N)\leq k^\prime$ and $\omega(M)\leq k$,
where $\omega_1(N)$ denotes the number of primes dividing $N$ only once.
Moreover, such integers $M, N$ can be bounded by some effectively computable constant $C_2(a, b, k, k^\prime)$
depending only on $a, b, k, k^\prime$.
\end{thm}

Using Suryanarayana's result that an odd superperfect number must be square, the latter result
gives the following corollary, which implies our conjecture in the case $\sigma(\sigma(N))=2N$.
Moreover, we observe that if $N$ is an odd superperfect number,
then $\sigma(N)$ must be a product of a square and a prime power $p^e$
with $p\equiv e\equiv 1\pmod{4}$ and $\sigma(\sigma(\sigma(N)))=\sigma(2N)=3\sigma(N)$.
Hence, we have the following finiteness result.

\begin{cor}\label{cor14}
For each fixed $k$, there exists some effectively computable constant $B_k$ such that,
if $N$ is an odd superperfect number with either $\omega(N)\leq k$ or $\omega(\sigma(N))\leq k$,
then $N\leq B_k$.
\end{cor}

The corresponding result for odd perfect numbers has been known for years.
Dickson \cite{Dic} proved that there exist only finitely many odd perfect numbers $N$ with $\omega(N)\leq k$
for each fixed $k$ and an effective upper bound was given by Pomerance \cite{Pom2},
improved by Nielsen \cite{Nie1, Nie2}.

It will be relatively easy to show that there exist only finitely many odd superperfect numbers $N$
with \textit{both} $\omega(N)$ and $\omega(\sigma(N))$ fixed, using Dickson's argument.
However, if we only require that $\omega(N)$ \textit{or} $\omega(\sigma(N))$ is fixed,
we have only known that $\omega(N)\geq 2$ from \cite{Sur2} and $\omega(\sigma(N))\geq 2$ from \cite{DHP},
which is implied by the result of \cite{Pom1} mentioned above.
Our corollary states that, even under this restriction, we can obtain an effective upper bound
for an odd superperfect number $N$.

Our argument in this paper is based upon the one in \cite{Ymd2}.
In \cite{Ymd2}, we used the fact that if $\sigma^*(\sigma^*(N))=2N$,
then $N$ must be factored into $N=\prod_i p_i^{e_i}$
with $p_i^{e_i}+1=2^{a_i}q^{b_i}$ for some integers $a_i, b_i, q$.
This means that $p_i^{e_i}$'s must be distributed very thin and therefore
the product of $\sigma^*(p_i^{e_i})/p_i^{e_i}$'s must be small.

However, we deal with the $\sigma$ function in this paper.
For a small prime $p$, $\sigma(p^e)/p^e$ must be fairly large
and therefore our argument from \cite{Ymd2} does not work.

We introduce some preliminary notations.
In order to prove Theorems \ref{th11}-\ref{th13}, we consider slightly more general situation.
Assume that $N$ is an integer such that $\omega(\sigma(N))=k$
and we let $q_1<q_2< \cdots< q_k$ to be the prime divisors of $\sigma(N)$.
For each $1\leq r\leq k$ and a prime power $l$, let $S_{r, l}=S_{r, l}(q_1, q_2, \ldots, q_r)$
denote the set of prime divisors $p$ of $N$ such that
$p^e\mid\mid N$ with $l\mid (e+1)$ and
\begin{equation}\label{eq00}
\frac{p^l-1}{p-1}=\sigma(p^{l-1})=\prod_{i=1}^r q_i^{a_i}
\end{equation}
for some integers $a_i (1\leq i\leq r)$ with $a_r\neq 0$ and let $S_r=\cup_l S_{r, l}$,
where $l$ runs over all prime powers.
Clearly, each prime divisor of $N$ must belong to a set $S_{r, l}$
for some $1\leq r\leq k$ and a prime power $l$.

In Section \ref{main}, using a lower bound for linear forms of logarithms, we shall show that
each $S_r$ contains at most $r$ small primes.
Combined with Lemma \ref{lmdi}, which states that the contribution of large prime factors
to the size of $\sigma(M)\sigma(N)/MN$ must be very small, we shall prove the following fact.
\begin{thm}\label{th0}
Let $N$ be an integer such that $\omega(\sigma(N))=k$
and let $q_1<q_2<\cdots <q_k$ be the prime divisors of $\sigma(N)=aM$ as introduced above.
Then, for every $r=1, 2, \ldots, k$, $N$ has at most $r$ prime factors in $S_r$
below $C_4(r, q_r)=\exp (C_3(r)(\log q_r/\log\log q_r)^{1/2(r+1)})$ and
\begin{equation}\label{eq0}
\sum_{\substack{p\in S_r,\\ p\geq C_4(r, q_r)}}\frac{1}{p}< \exp \left(-C_5(r)\left(\frac{\log q_r}{\log\log q_r}\right)^\frac{1}{2(r+1)}\right).
\end{equation}
\end{thm}

This theorem allows us to overcome the above-mentioned obstacle.
Indeed, it is not difficult to derive Theorem \ref{th11} from Theorem \ref{th0},
as shown in Section \ref{Pr11}.
With the aid of a diophantine inequality shown in Section \ref{appr}, we shall prove
Theorems \ref{th12} and \ref{th13}.

\section{Preliminary lemmas}\label{lemmas}

In this section, we introduce some preliminary lemmas.

The first lemma is a special case of Matveev\cite[Theorem 2.2]{Mat},
which gives a lower bound for linear forms of logarithms.
We use this lemma to prove our gap principle in Section \ref{main}.
The second lemma describes an elementary property of values of cyclotomic polynomials.

\begin{lem}\label{lmll}
Let $a_1, a_2, \ldots, a_n$ be positive integers with $a_1>1$.
For each $j=1, \ldots, n$, let $A_j\geq\max\{0.16, \log a_j\}$.
Let $b_1, b_2, \ldots, b_n$ be arbitrary integers.

Put
\begin{equation}
\begin{split}
B&=\max \{1, \abs{b_1}A_1/A_n, \abs{b_2}A_2/A_n, \ldots, \abs{b_n} \},\\
\Omega&=A_1A_2\ldots A_n,\\
C(n)&=\frac{16}{n!}e^n(2n+3)(n+2)(4(n+1))^{n+1}\left(\frac{1}{2}en\right)(4.4n+5.5\log n+7) \\
\end{split}
\end{equation}
and
\begin{equation}
\Lm=b_1\log a_1+\ldots+b_n\log a_n.
\end{equation}
Then we have $\Lm=0$ or
\begin{equation}
\log\abs{\Lm}>-C(n)(1+\log 3-\log 2+\log B)\max\left\{1, \frac{n}{6}\right\}\Omega.
\end{equation}
\end{lem}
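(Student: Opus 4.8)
This lemma is the specialization to rational integers (the case $D=1$ in the usual notation) of the main theorem of Matveev's paper \cite{Mat}, so strictly speaking the "proof" is a pointer to \cite{Mat}; we only need the shape of the bound and take the statement as a black box for the gap principle in Section \ref{main}. Were one to reconstruct the argument, it would proceed along the lines of Baker's method in the theory of linear forms in logarithms, and the plan would be the following. Assume $\Lm\neq 0$ and, for contradiction, that $\log\abs{\Lm}\leq -C_1(n)(C_0+\log B)\max\{1,n/6\}\Omega$. The idea is to manufacture an auxiliary object that is simultaneously arithmetically nontrivial and analytically tiny.

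Concretely, one fixes integer parameters $L_0,\dots,L_n$ and $S$ governing the range of exponents and of evaluation points, and considers the function $\Phi(z)=\sum c_{\ell_0,\dots,\ell_n}\,z^{\ell_0}\prod_{j=1}^n a_j^{\ell_j z}=\sum c_{\ell_0,\dots,\ell_n}\,z^{\ell_0}e^{(\sum_j \ell_j\log a_j)z}$, or, in one of the quantitatively efficient reformulations, an interpolation determinant built from the values of the monomials $z^{\ell_0}\prod_j a_j^{\ell_j z}$ at a well-chosen set of integer points. Siegel's lemma (or a direct determinant estimate) produces coefficients $c_{\ell_0,\dots,\ell_n}$, not all zero and of controlled size, so that $\Phi$ vanishes to high order at $z=0,1,\dots$ up to some bound; here one exploits that $\prod_j a_j^{\ell_j}$ differs from $1$ by a quantity governed by $\Lm$, so the exponentials are all close to $1$ and the linear system is an almost-Vandermonde one.

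The heart of the matter is then the collision of two estimates. On the analytic side, a Schwarz-lemma / maximum-modulus argument, fed by the assumed smallness of $\abs{\Lm}$, propagates the vanishing, so that $\Phi$ and its low-order derivatives remain extremely small on a larger set of points than one started with. On the arithmetic side, any nonzero value of $\Phi$ (or of the determinant) is a nonzero rational with bounded denominator, hence bounded below by a Liouville-type inequality. Reconciling these forces $\Phi$ to vanish to an order the extrapolation makes impossible, unless the $a_j$ satisfy a multiplicative relation; a zero/multiplicity estimate lets one absorb such a relation into a lower-dimensional instance of the same problem, and this descent on $n$, together with optimization of $L_0,\dots,L_n,S$, yields the displayed bound with its explicit polynomial-times-factorial constant $C_1(n)$.

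The main obstacle is exactly this last step: extracting a fully explicit constant requires tracking every implied constant through Siegel's lemma, the analytic extrapolation, and the multiplicity estimate, and then optimizing the many free parameters — a long and delicate computation. Matveev's contribution was to carry this out while keeping the dependence on $n$ as mild as $C_1(n)$ above, with no spurious $\log\Omega$ factor, which is precisely what makes the lemma usable here. For our purposes we therefore simply quote \cite{Mat}.
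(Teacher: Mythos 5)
Your proposal matches the paper exactly: the paper states this lemma as a quotation of Matveev's theorem \cite{Mat} and gives no proof, so citing Matveev as a black box is precisely what is intended. Your additional sketch of the Baker-method machinery is accurate background but not required.
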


\begin{rem}
The assumption that $a_1>1$ is added in order to ensure that
$\log a_1, \log a_2, \ldots, \log a_r$ are linearly independent over the integers
for some $r (1\leq r\leq n)$.
We note that we do not need recent results for linear forms in logarithms.
We can see that a lower bound of the form $\log\abs{\Lm}>-B^{1/g(n)}\log^{f(n)} A$,
where $f(n), g(n)$ are effectively computable functions of $n$ such that $g(n)>1$,
is strong enough for our purpose.  Such an estimate would increase the right hand side
of \eqref{eqfd0} but still give an estimate that $\log n_r<\log^{h(r)} m_{r+1}$
for some effectively computable function $h(r)$.
Thus even an old estimate such as Fel'dman\cite{Fel} suffices.
\end{rem}

\begin{lem}\label{lm22}
If $a, l$ are positive integers with $a\geq 2, l\geq 3$ and $(a, l)\neq (2, 6)$,
then $(a^l-1)/(a-1)$ must have at least $\tau(l)-1$ distinct prime factors,
where $\tau(l)$ denotes the number of divisors of $l$.
Moreover, at least one of such prime factors is congruent to $1\pmod{l}$.
\end{lem}

\begin{proof}
A well known result of Zsigmondy \cite{Zsi} states that if $a\geq 2, n\geq 3$ and $(a, n)\neq (2, 6)$,
then $a^n-1$ has a prime factor which does not divide $a^m-1$ for any $m<n$.
Applying this result to each divisor $d>1$ of $l$, we see that
$(a^l-1)/(a-1)$ must have at least $\tau(l)-1$ prime factors.
In particular, applying with $n=l$, we obtain a prime factor $q$ such that
$a\pmod{q}$ has order $l$ and therefore $q\equiv 1\pmod{l}$.
\end{proof}

\section{The distribution of large primes in $S_{r, l}$}\label{largeprimes}
In this section, we shall give an upper bound for the sum $\sum_{p\in S_{r, l}, p>X} 1/p$ for each fixed $r, l$.

\begin{lem}\label{lmdi0}
Let $p_0, p_1, p_2$ be distinct primes with $p_0$ odd, $l$ and $f$ be positive integers
and put $H_i=\floor{f\log p_0/\log p_i}$ for $i=1, 2$.
If the congruence
\begin{equation}\label{eqdi1}
p_i^l\equiv 1\pmod{p_0^f}
\end{equation}
holds for $i=1, 2$, then
\begin{equation}
\frac{1}{2}H_1H_2\leq \gcd (l, p_0^{f-1}(p_0-1)).
\end{equation}
\end{lem}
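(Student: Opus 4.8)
The plan is to exploit the fact that if $p_i^e \equiv 1 \pmod{p_0^f}$ for $i = 1, 2$, then both $p_1$ and $p_2$ lie in the (cyclic) subgroup of $(\mathbf{Z}/p_0^f\mathbf{Z})^*$ consisting of elements whose order divides $e$. That subgroup has order $\gcd(e, \varphi(p_0^f)) = \gcd(e, p_0^{f-1}(p_0-1))$, and in fact the part coprime to $p_0$ of this gcd is $(e, p_0 - 1)$; I will aim to reduce everything to the group of order roughly $(e, p_0-1)$. The key quantitative point is that the $p_0$-adic valuation of $p_i^e - 1$ is at least $f$, so by a lifting-the-exponent type estimate the multiplicative order of $p_i$ modulo $p_0$ is ``small'' relative to $p_0^f$, and this forces $p_1, p_2$ to be genuinely small compared to $p_0^f$ — which is exactly what $H_i = f\log p_0/\log p_i$ being large encodes. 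So the heart of the matter is a counting/pigeonhole argument: there are at most $(e, p_0-1)$ residue classes mod $p_0$ (times a power of $p_0$, handled separately) available to $p_1$ and $p_2$, but if both $H_1$ and $H_2$ were large, $p_1$ and $p_2$ would be forced to be so small that the relevant subgroup of $(\mathbf{Z}/p_0^f\mathbf{Z})^*$ could not contain enough distinct small integers.

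Concretely, the first step is to record that condition \eqref{eqdi1} says $\operatorname{ord}_{p_0^f}(p_i) \mid e$, hence $\operatorname{ord}_{p_0^f}(p_i) \mid (e, p_0^{f-1}(p_0-1))$. Writing $d_i = \operatorname{ord}_{p_0}(p_i)$, which divides $(e, p_0-1)$, I have $p_i^{d_i} \equiv 1 \pmod{p_0}$ with $p_i^{d_i} \not\equiv 1 \pmod{p_0}$ for no proper divisor — so $p_i^{d_i} - 1$ is divisible by $p_0$ and is a positive integer, forcing $p_i^{d_i} \geq p_0 + 1 > p_0$, i.e. $p_i > p_0^{1/d_i}$. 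This gives $\log p_i > (\log p_0)/d_i$, equivalently $d_i > \log p_0/\log p_i \geq H_i/f \cdot$ (after accounting for the factor $f$; more carefully I will want to work with $p_i^e \equiv 1 \pmod{p_0^f}$ directly and deduce $p_i^e \geq p_0^f + 1$, giving $e\log p_i > f\log p_0$, hence $e > H_i$). The second step is to combine the two inequalities $e > H_i$: naively this only gives $e > \max(H_1, H_2)$, not the product bound, so I must instead use the subgroup structure. The right move is: the elements $p_1, p_2 \bmod p_0$ both lie in the unique subgroup $G$ of $(\mathbf{Z}/p_0\mathbf{Z})^*$ of order $(e, p_0-1)$; since $G$ is generated by, say, $g$, and contains $|G| = (e, p_0-1)$ elements, while the smallest positive representatives of the nonidentity elements of $G$ that are primes must among themselves be $\geq$ suitable small integers, a counting argument bounds how small $p_1$ and $p_2$ can simultaneously be.

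The cleanest route to the stated inequality $\frac{1}{2}H_1 H_2 \leq (e, p_0-1)$ is probably the following. From $p_i^e \equiv 1 \pmod{p_0^f}$ we get $v_{p_0}(p_i^e - 1) \geq f$. Let $d = \operatorname{ord}_{p_0}(p_1 p_2^{-1})$ or better consider $p_1^{e} \equiv p_2^{e} \equiv 1$: then for any integers $s, t$, $p_1^s p_2^t \equiv 1 \pmod{p_0}$ whenever $d_1 \mid s$ and $d_2 \mid t$. The pair $(\log p_1, \log p_2)$ — I would look at the lattice of $(s,t) \in \mathbf{Z}^2$ with $p_1^s p_2^t \equiv 1 \pmod{p_0^f}$; this lattice has index dividing $\operatorname{ord}_{p_0^f}(\langle p_1, p_2\rangle) \leq (e, p_0^{f-1}(p_0-1))$ in $\mathbf{Z}^2$ once we quotient appropriately, and by Minkowski's theorem it contains a short vector $(s,t)$ with $|s||t| \lesssim$ the covolume; but $p_1^s p_2^t - 1$ is a nonzero integer divisible by $p_0^f$ (nonzero because $p_1, p_2$ are multiplicatively independent distinct primes), so $p_1^{|s|} p_2^{|t|} \geq p_0^f + 1$, giving $|s|\log p_1 + |t|\log p_2 \geq f\log p_0$, and feeding in the Minkowski bound on $|s|,|t|$ yields a lower bound on the covolume, i.e. $\gtrsim H_1 H_2$, against the upper bound $(e, p_0-1)$ (after stripping the $p_0$-power part). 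The factor $\frac12$ should fall out of the Minkowski constant.

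The main obstacle I anticipate is handling the $p_0$-part of $\varphi(p_0^f) = p_0^{f-1}(p_0-1)$ cleanly — i.e. showing that the relevant index is controlled by $(e, p_0-1)$ and not by $(e, p_0^{f-1}(p_0-1))$, which could be much larger. This requires a lifting-the-exponent argument: if $\operatorname{ord}_{p_0}(x) = d$ then $\operatorname{ord}_{p_0^f}(x) = d \cdot p_0^{j}$ for some $j \leq f-1$, and correspondingly $v_{p_0}(x^d - 1) = f - j$; one must check that the $p_0$-power contributions to $v_{p_0}(p_i^e-1)$ are accounted for separately and do not inflate the count of available residues mod $p_0$. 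I would route around this by working with the short vector $(s,t)$ from Minkowski and the sharper claim $v_{p_0}(p_1^s p_2^t - 1) \geq f$ forcing the size lower bound $|s|\log p_1 + |t|\log p_2 \geq f \log p_0$ directly, so that the only group-theoretic input needed is that the index of the "kills mod $p_0$" lattice is at most $(e, p_0-1)$ — which follows purely from $\operatorname{ord}_{p_0}(p_i) \mid (e, p_0-1)$ — and the $p_0$-power subtlety enters only through the harmless passage from "mod $p_0$" to "mod $p_0^f$", contributing to the exponent of $p_0$ on the right of the size inequality, which is all to the good.
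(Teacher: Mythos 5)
Your lattice--Minkowski formulation is essentially a dual packaging of the paper's own argument, which directly counts the (at least $\frac{1}{2}H_1H_2$) pairs $(a_1,a_2)$ of nonnegative integers with $a_1\log p_1+a_2\log p_2<f\log p_0$: the products $p_1^{a_1}p_2^{a_2}$ are then distinct integers below $p_0^f$, hence distinct residues, yet all lie in the subgroup of $(\mathbf{Z}/p_0^f\mathbf{Z})^*$ generated by $p_1$ and $p_2$. Up to that point the differences are cosmetic: your version loses a constant (the symmetric box gives $\frac{1}{4}H_1H_2$ rather than $\frac{1}{2}H_1H_2$), and when the short vector $(s,t)$ has mixed signs $p_1^sp_2^t-1$ is not an integer, so you must clear denominators and use $|p_1^{|s|}-p_2^{|t|}|\geq p_0^f$ instead.

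The genuine gap is exactly the point you flag and then try to route around: replacing the order of the relevant subgroup (equivalently, the index of your relation lattice) by $(e,p_0-1)$ instead of $\gcd(e,\varphi(p_0^f))=\gcd(e,p_0^{f-1}(p_0-1))$. Your proposed fix mixes two incompatible lattices: a short vector of the lattice of relations modulo $p_0$ (whose index is indeed at most $(e,p_0-1)$) only gives $v_{p_0}(p_1^sp_2^t-1)\geq 1$, so the size inequality is $|s|\log p_1+|t|\log p_2>\log p_0$ rather than $f\log p_0$ and the crucial factor $f^2$ in $H_1H_2$ is lost; a short vector of the lattice modulo $p_0^f$ does give valuation at least $f$, but its index is only bounded by $\gcd(e,p_0^{f-1}(p_0-1))$, whose $p_0$-part cannot be stripped away. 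This cannot be repaired, because the inequality with $(e,p_0-1)$ is false as stated: take $p_0=3$, $p_1=7$, $p_2=13$, $f=3$, $e=9$; then $7^9\equiv 13^9\equiv 1\pmod{27}$ and $(e,p_0-1)=1$, while $\frac{1}{2}H_1H_2=\frac{9(\log 3)^2}{2\log 7\log 13}\approx 1.09>1$, and taking $e=3^{f-1}$ with $f\to\infty$ makes $H_1H_2$ arbitrarily large. (The paper's own proof makes the same slip when it asserts that the residual orders of $p_1,p_2$ modulo $p_0^f$ divide $(e,p_0-1)$.) What your argument and the paper's counting actually establish is $\frac{1}{2}H_1H_2$ (or $\frac{1}{4}H_1H_2$ in your version) bounded by the order of $\langle p_1,p_2\rangle$ modulo $p_0^f$, hence by $\gcd(e,p_0^{f-1}(p_0-1))$, which for odd $p_0$ is at most $e$; that weaker bound is all that Lemma \ref{lmdi1} and the rest of the paper use, but as written your proposal does not prove the stated inequality.
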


\begin{proof}
It is clear that $p_1^{a_1}p_2^{a_2}$ takes distinct values modulo $p_0^f$
for all nonnegative integers $a_1$ and $a_2$ with $0\leq a_1\log p_1+a_2\log p_2<f\log p_0$.
So that $p_1^{a_1}p_2^{a_2}$ takes at least $H_1H_2/2$ distinct values modulo $p_0^f$.
But these can take at most $\gcd (l, p_0^{f-1}(p_0-1))	$ distinct values since
both $p_1$ and $p_2$ have residual orders dividing $\gcd (l, p_0^{f-1}(p_0-1))$ modulo $p_0^f$ by \eqref{eqdi1}.
Hence, we obtain $H_1H_2/2\leq \gcd (l, p_0^{f-1}(p_0-1))$.
\end{proof}

\begin{lem}\label{lmdi1}
Let $p_0, p_1, p_2$ be distinct primes with $p_2>p_1$ and $p_0>2$ and $q, s$ be positive integers.
If $l$ is an integer greater than $5s^2$ and there are integers $f_1, f_2$ such that $p_0^{f_i}\mid\sigma(p_i^{l-1})$ and $p_0^{f_i}\geq\sigma(p_i^{l-1})^{1/s}$ for $i=1, 2$, then
\begin{equation}\label{eqdi5}
\log p_2>\frac{5}{4}\log p_1.
\end{equation}
\end{lem}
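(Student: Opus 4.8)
The plan is to derive (\ref{eqdi5}) from Lemma \ref{lmdi0} applied to the two primes $p_1, p_2$ with the modulus $p_0^f$, where $f=\min\{f_1,f_2\}$. First I would set up notation: write $\sigma(p_i^{e-1})=(p_i^e-1)/(p_i-1)$, so the hypothesis says $p_0^{f_i}$ is a divisor of this of size at least $\sigma(p_i^{e-1})^{1/s}$, i.e. $f_i\log p_0\geq (1/s)\log\sigma(p_i^{e-1})\geq (1/s)(e-1)\log p_i$ (using $\sigma(p_i^{e-1})>p_i^{e-1}$). Since $p_0^{f_i}\mid\sigma(p_i^{e-1})\mid p_i^e-1$, we have $p_i^e\equiv 1\pmod{p_0^{f_i}}$, hence $p_i^e\equiv 1\pmod{p_0^f}$ for $f=\min\{f_1,f_2\}$, so the hypothesis of Lemma \ref{lmdi0} is met. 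That lemma then yields $\tfrac12 H_1H_2\leq (e,p_0-1)$ with $H_i=f\log p_0/\log p_i$.

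Next I would bound both sides. On the one hand $(e,p_0-1)\leq e$. On the other hand, $H_i=f\log p_0/\log p_i$ and $f\log p_0\geq (1/s)(e-1)\log p_i$ gives $H_i\geq (e-1)/s$; but more usefully I want to keep one factor $H_2$ essentially free. Write $f\log p_0\geq \tfrac{1}{s}(e-1)\log p_1$ (using the smaller prime gives the weaker but still valid lower bound via $f_1$, or rather I should use $f\log p_0 = \min_i f_i\log p_0 \geq \tfrac1s(e-1)\log p_1$ since $\log p_1\le\log p_2$ so the $i=1$ estimate $f_1\log p_0\ge\tfrac1s(e-1)\log p_1$ is the binding one after taking min — I need to be a little careful here and may instead use $f\log p_0\ge \tfrac1s(e-1)\log p_1$ which follows because for each $i$, $f_i\log p_0\ge\tfrac1s(e-1)\log p_i\ge\tfrac1s(e-1)\log p_1$). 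Then $H_1 = f\log p_0/\log p_1\geq (e-1)/s$ and $H_2 = f\log p_0/\log p_2\geq \tfrac1s(e-1)(\log p_1/\log p_2)$. Plugging into $\tfrac12 H_1H_2\le e$:
\begin{equation}
\frac{(e-1)^2}{2s^2}\cdot\frac{\log p_1}{\log p_2}\leq e.
\end{equation}
Using $e>3s^2$, the factor $(e-1)^2/e$ is comfortably larger than $\tfrac94 s^2$ (indeed for $e\ge 4$, $(e-1)^2/e\ge (3/4)e>\tfrac94 s^2$; one should check the constant carefully), so $\tfrac1{2s^2}\cdot\tfrac94 s^2\cdot\tfrac{\log p_1}{\log p_2}<\tfrac{(e-1)^2}{2s^2 e}\cdot\tfrac{\log p_1}{\log p_2}\le 1$, which rearranges to $\log p_2 > \tfrac98\log p_1$, as desired.

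The main obstacle I anticipate is pinning down the exact inequality $e>3s^2\Rightarrow (e-1)^2/e > \tfrac94 s^2$ (or whatever the sharp threshold turns out to be) so that the constant $9/8$ comes out exactly rather than something slightly weaker; this is just a one-variable estimate but needs the endpoint cases handled, and it is the reason the hypothesis is stated as $e>3s^2$ rather than $e\ge s^2$. A secondary technical point is the correct bookkeeping of which of $f_1,f_2$ governs after taking $f=\min\{f_1,f_2\}$ — one must verify that $f\log p_0\geq\tfrac1s(e-1)\log p_i$ still holds for the relevant $i$, which it does because each $f_i\log p_0\geq\tfrac1s(e-1)\log p_i$ and $\log p_i$ is increasing in $i$. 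Everything else is a direct substitution into Lemma \ref{lmdi0}.
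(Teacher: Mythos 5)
Your argument is essentially the paper's own proof: set $f=\min\{f_1,f_2\}$, note $p_i^e\equiv 1\pmod{p_0^f}$ and feed this into Lemma \ref{lmdi0}, use $H_1\geq (e-1)/s$ (valid after taking the minimum, exactly as you check) to get $\log p_2\geq\frac{(e-1)^2}{2es^2}\log p_1$, and then invoke $e>3s^2$. One small repair to your final step: the auxiliary claim $(e-1)^2/e\geq\tfrac34 e$ is false for $e\leq 7$; what you actually need is that $(e-1)^2/(2es^2)\geq 9/8$ for all $e\geq 3s^2+1$ (since $(e-1)^2/e$ is increasing, with equality only at $s=1$, $e=4$), the strictness of the conclusion then coming from $\sigma(p_i^{e-1})>p_i^{e-1}$, which makes the bound $H_1>(e-1)/s$ strict --- the paper's write-up is equally terse here, so this is a shared loose end rather than a defect of your approach.
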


\begin{proof}
Assume that $p_1, p_2$ are two distinct primes satisfying the assumption in the lemma
but $\log p_2\leq (5/4)\log p_1$, contrary to the statement of the lemma.
Let $f=\min \{f_1, f_2\}$ and $H_i=\floor{f\log p_0/\log p_i}$.
We observe that $p_0^{f_i}\geq\sigma(p_i^{l-1})^{1/s}>p_i^{(l-1)/s}$ for $i=1, 2$
and therefore $f\log p_0>(l-1)\log p_1/s$.
Since we have assumed that $l>5s^2$, we have
\begin{equation}
H_1\geq \floor{\frac{l-1}{s}}\geq 5s
\end{equation}
and
\begin{equation}
H_2\geq \floor{\frac{(l-1)\log p_1}{s\log p_2}}\geq \floor{\frac{4(l-1)}{5s}}\geq 4s.
\end{equation}
By the definition of $H_i$, we can easily see that
\begin{equation}
H_1>\frac{5sf\log p_0}{(5s+1)\log p_1}\geq \frac{5f\log p_0}{6\log p_1}
\end{equation}
and
\begin{equation}
H_2>\frac{4sf\log p_0}{(4s+1)\log p_2}\geq \frac{4f\log p_0}{5\log p_2}.
\end{equation}
Hence, Lemma \ref{lmdi0} gives
\begin{equation}
l\log p_1\log p_2>\frac{1}{3}f^2 \log^2 p_0>\frac{(l-1)^2}{3s^2}\log^2 p_1.
\end{equation}
Recalling the assumption that $l>5s^2$, we have
\begin{equation}
\frac{\log p_2}{\log p_1}>\frac{(l-1)^2}{3ls^2}\geq \frac{25s^2}{3(5s^2+1)}\geq\frac{25}{18}>\frac{5}{4},
\end{equation}
which contradicts to the assumption.  Hence, we have $p_2>p_1^{5/4}$.
\end{proof}

Using this result, we obtain the following inequality.

\begin{lem}\label{lmdi}
For any set $S_{r, l}$ defined in the introduction and $X>2$, we have
\begin{equation}
\sum_{p>X, p\in S_{r, l}}\frac{1}{p}<\frac{C_6(r)\log^r X}{X}.
\end{equation}
\end{lem}

\begin{rem}
It is well known that, for fixed integers $l>2, r$ and fixed primes $q_1, q_2, \ldots, q_r$,
there exist only finitely many primes $p$ and integers $a_1, a_2, \ldots, a_r$
satisfying \eqref{eq00}.
Combining Coates' theorem \cite{Coa} and Schinzel's theorem \cite{Scz},
it follows that such integers and, consequently, the elements of $S_{r, l}$
are bounded by an effectively computable constant depending on $l$ and the $q_i$'s.
For details of the history of the largest prime factor of polynomial values,
see Chapter 7 of Shorey and Tijdeman's book \cite{ST}.
Furthermore, two theorems of Evertse \cite{Ev1}\cite{Ev2} imply that
$\abs{S_{r, l}}$ is bounded by an effectively computable constant depending on $r, l$.
However, in this paper, we need a result depending only on $r$.
\end{rem}

\begin{proof}
First we note that $S_{r, l}$ can be divided into $r$ sets $S_{r, l, j} (1\leq j\leq r)$ so that
if $p\in S_{r, l, j}$, then $q_j^{f_j}\mid\sigma(p^{l-1})$ for an integer $f_j$
such that $q_j^{f_j}\geq\sigma(p^{l-1})^{1/r}$.

Assume that $l>5r^2$.
If $p_1<p_2$ are two primes belonging to $S_{r, l, j}$, then
$\log p_2>(5/4)\log p_1$ by Lemma \ref{lmdi1}.
Hence, we obtain
\begin{equation}
\sum_{\substack{p>X,\\ p\in S_{r, l, j}}}\frac{1}{p}<\sum_{i=0}^\infty \frac{1}{X^{(5/4)^i}}<\frac{4}{X}
\end{equation}
and therefore
\begin{equation}
\sum_{\substack{p>X,\\ p\in S_{r, l}}}\frac{1}{p}<\sum_{j=1}^r\sum_{\substack{p>X,\\ p\in S_{r, l, j}}}\frac{1}{p}<\frac{2r}{X}<\frac{C_6(r)\log^r X}{X}.
\end{equation}

Next assume that $l\leq 5r^2$.
It is clear that the number of primes $p<x$ belonging to $S_{r, l}$
is at most $(l\log x)^r/\prod_{i=1}^r \log q_i$ and partial summation gives
\begin{equation}
\sum_{p>X, p\in S_{r, l}}\frac{1}{p}<\frac{1}{X}+\int_X^\infty\frac{(l\log t)^r dt}{t^2\prod_{i=1}^r \log q_i}< C_7(r)\frac{(l\log X)^r}{X}.
\end{equation}
Since $l\leq 5r^2$, we have $(l\log X)^r\leq (5r^2\log X)^r$ and therefore
\begin{equation}
\sum_{p>X, p\in S_{r, l}}\frac{1}{p}<C_6(r)\frac{\log^r X}{X}.
\end{equation}
\end{proof}

\section{Main theory - proof of Theorem {\ref{th0}}}\label{main}

In this section, we shall prove Theorem \ref{th0}, which plays the most essential role in this paper.
We begin by proving the following lemma.

\begin{lem}\label{lmfe}
Let $r, l_1, \ldots, l_{r+1}$ and $n_1<n_2< \ldots <n_r$ be positive integers.
Let $m_1<m_2< \cdots <m_{r+1}$ be distinct primes.
Assume that there exist integers $a_{ij} (1\leq i\leq s+1, 1\leq j\leq s)$ such that
\begin{equation}\label{eqfd1}
\frac{m_j^{l_j}-1}{m_j-1}=\prod_{i=1}^{r} n_i^{a_{ij}}
\end{equation}
for $j=1, 2, \ldots, r+1$ and $a_{ij}>0$ for the index $i$ for which $n_i$ assumes the maximum.
Then we have
\begin{equation}\label{eqfd0}
\log n_r<C_8(r)(\log^{2(r+1)} m_{r+1})(\log\log m_{r+1}).
\end{equation}
\end{lem}

\begin{proof}
We put
\begin{equation}
\Lm_j=-l_j\log m_j+\log (m_j-1)+\sum_i a_{ij}\log n_i=\log\left(\frac{m_j^{l_j}-1}{m_j^{l_j}}\right)\neq 0
\end{equation}
for each $j=1, 2, \ldots, r+1$.  Since $\Lm_j\neq 0$, using Matveev's lower bound given in Lemma \ref{lmll}
we obtain
\begin{equation}
\log \abs{\Lm_j}>-C(r+2)\log\left(\frac{3el_j\log m_j}{2\log n_r}\right)\log^2 m_j\prod_{i=1}^r \log n_i.
\end{equation}
Observing that $\abs{\Lm_j}<1/(m_j^{l_j}-1)$, we have
\begin{equation}\label{eqfd11}
l_j<C_9(r)\log m_j \log^r n_r\log(\log^2 m_j\log^{r-1} n_r)
\end{equation}
and therefore
\begin{equation}\label{eqfd12}
a_{ij}<l_j\frac{\log m_j}{\log n_i}<C_{10}(r)\log^2 m_j \log^r n_r\log(\log m_j \log n_r).
\end{equation}

Putting $A=C_{10}(r)\log^2 m_{r+1} \log^r n_r\log(\log m_{r+1}\log n_r)$,
we see that \eqref{eqfd1} ensures the existence of integers $g_1, \ldots, g_{r+1}$
not all zero with absolute values at most $((r+1)^{1/2}A)^r$ such that
\begin{equation}
\prod_{j=1}^{r+1}\left(\frac{m_j^{l_j}-1}{m_j-1}\right)^{g_j}=1
\end{equation}
by an improved form of Siegel's lemma (the original form of Siegel's lemma gives the upper bound $1+((r+1)A)^r$.
For detail, see Chapter I of \cite{Sch}).

We put
\begin{equation}
\Lm=\sum_{j=1}^{r+1} g_j l_j \log m_j-g_j\log (m_j-1).
\end{equation}
Since
\begin{equation}
\Lm=\sum_{j=1}^{r+1} g_j\log\left(\frac{m_j^{l_j}}{m_j-1}\right)=\sum_{j=1}^{r+1}g_j\log\left(\frac{m_j^{l_j}}{m_j^{l_j}-1}\right)
\end{equation}
and $m_j^{l_j}-1$ must be divisible by $n_r$ by assumption, we have
\begin{equation}\label{eqfd13}
\begin{split}
\abs{\Lm}< & \sum_{j=1}^{r+1}\frac{g_j}{m_j^{l_j}-1}\leq\frac{2(r+1)(rA)^r}{n_r} \\
< & 2(r+1)^2(rC_{10}(r))^r \\
& \times \frac{\log^{2r} m_{r+1} \log^{r^2} n_r \log^r(\log m_{r+1}\log n_r)}{n_r}.
\end{split}
\end{equation}

We observe that $\Lm$ does not vanish since $e^\Lm=\prod_{j=1}^{r+1}m_j^{l_jg_j}/(m_j-1)^{g_j}$
must be divisible by the largest prime $m_t$ among $m_j$'s for which $l_tg_t\neq 0$.
Hence, taking $G=\max\{\abs{g_j l_j \log m_j/\log m_{r+1}}\mid 1\leq j\leq r+1 \}$,
for which we have
\begin{equation}
G<C_{11}(r)(r+1)^{r/2}\log^{2r+3} m_{r+1} \log^{r^2+1} n_r
\end{equation}
from \eqref{eqfd12}, we can apply Matveev's theorem to $\Lm$ and obtain
\begin{equation}\label{eqfd14}
\begin{split}
\log\abs{\Lm}\geq & -C(2(r+1))\left(\log \left(\frac{3}{2}eG\right)\right)\prod_{j=1}^{r+1}(\log m_j)^2 \\
\geq & -C_{12}(r)\log(\log m_{r+1}\log n_r)\log^{2(r+1)} m_{r+1}.
\end{split}
\end{equation}
Now, combining inequalities \eqref{eqfd13} and \eqref{eqfd14}, we have
\begin{equation}
\log n_r<C_8(r)(\log^{2(r+1)} m_{r+1})(\log\log m_{r+1}),
\end{equation}
which proves the lemma.
\end{proof}

We see that the former part of Theorem \ref{th0} is an immediate consequence of this lemma.
Indeed, taking $p_1<p_2<\cdots<p_{r+1}$ to be any $r+1$ prime factors of $N$
and applying Lemma \ref{lmfe} with $m_i=p_i$ for $i=1, 2, \ldots , r+1$ and
$n_j=q_j$ for $j=1, 2, \ldots, r$, we must have
$\log p_{r+1}>C_3(r)(\log q_r/\log\log q_r)^{1/2(r+1)}$.
Thus it remains to prove \eqref{eq0}.
By Lemma \ref{lmdi}, we have, for each $r, l$,
\begin{equation}
\sum_{\substack{p\in S_{r, l},\\ p\geq C_4(r, q_r)}}\frac{1}{p}<\exp\left(-C_{13}(r)\left(\frac{\log q_r}{\log\log q_r}\right)^\frac{1}{2(r+1)}\right).
\end{equation}
Since $l$ must be a prime power dividing one of $(q_i-1)$'s $(1\leq i\leq r)$ by Lemma \ref{lm22},
there exist at most $\log^r q_r$ choices for $l$.
Hence, we obtain
\begin{equation}\label{eqs1}
\begin{split}
\sum_{\substack{p\in S_r,\\ p\geq C_4(r, q_r)}}\frac{1}{p}< & (\log q_r)^r \exp\left(-C_{13}(r)\left(\frac{\log q_r}{\log\log q_r}\right)^\frac{1}{2(r+1)}\right) \\
< & \exp \left(-C_5(r)\left(\frac{\log q_r}{\log\log q_r}\right)^\frac{1}{2(r+1)}\right).
\end{split}
\end{equation}
This completes the proof of Theorem \ref{th0}.

\section{Proof of Theorem {\ref{th11}}}\label{Pr11}

We may assume that $\sigma(N)$ has exactly $k$ distinct prime factors.
By Theorem \ref{th0}, there exist at most $k(k+1)/2$ prime factors $p$ of $N$
for which $p\in S_r$ and $p<C_4(r, q_r)$ for some $r$.
Let $T$ be the set of such primes.
Then, summing \eqref{eq0} over $r=1, 2, \ldots, k$, we obtain
\begin{equation}
\sum_{p\mid N, p\not\in T}\frac{1}{p}=\sum_{r=1}^k \sum_{\substack{p\in S_r,\\ p\geq C_4(r, q_r)}}\frac{1}{p}<\exp \left(-C_{14}(k)\left(\frac{\log q_1}{\log\log q_1}\right)^\frac{1}{2(k+1)}\right).
\end{equation}

Since the sum of reciprocals of the first $k(k+1)/2$ primes is \\ $<C_{15}\log\log k$,
we have
\begin{equation}
\begin{split}
\sum_{p\mid N}\frac{1}{p}= & \sum_{p\in T}\frac{1}{p}+\sum_{p\mid N, p\not\in T}\frac{1}{p} \\
< & C_{15}\log\log k+\exp \left(-C_{14}(k)\left(\frac{\log q_1}{\log\log q_1}\right)^\frac{1}{2(k+1)}\right).
\end{split}
\end{equation}
Hence, $\sum_{p\mid N}(1/p)<C_{16}(k)$.
Clearly we have $\sum_{p\mid M}(1/p)<C_{17}(k)$ since $M$ has at most $k$ distinct prime factors.
Now Theorem \ref{th11} immediately follows from the observation
that $\sigma(N)/N<\prod_{p\mid N} p/(p-1)<\exp (\sum_{p\mid N} 1/(p-1))<\exp (\sum_{p\mid N} (2/p))$.

\section{Approximation of rational numbers}\label{appr}
In this section, we shall prove a lemma concerning diophantine approximation which is used to prove Theorem \ref{th12} and \ref{th13}.
We shall begin with introducing some notations.
For each prime $p$, we let $h(p^g)=\sigma(p^g)/p^g$ for $g=1, 2, \ldots$ and $h(p^\infty)=p/(p-1)$.
Moreover, for not necessarily distinct primes $p_1, p_2, \ldots, p_k$
and $e_1, e_2, \ldots, e_k\in \{0, 1, 2, \ldots, \infty\}$,
we let $h(p_1^{e_1}, p_2^{e_2}, \ldots, p_k^{e_k})=\prod_{i=1}^k h(p_i^{e_i})$.

We observe that if $p_1, p_2, \ldots, p_k$ are distinct primes and $e_1, e_2, \ldots, e_k$
are nonnegative integers, then
\begin{equation}
h(p_1^{e_1}, p_2^{e_2}, \ldots, p_k^{e_k})=\frac{\sigma(p_1^{e_1}p_2^{e_2}\cdots p_k^{e_k})}{p_1^{e_1}p_2^{e_2}\cdots p_k^{e_k}}.
\end{equation}
For brevity, we write $h(p_1^{x_1}, p_2^{x_2}, \ldots, p_k^{x_k})=h_k(p^x)$
and $h(p_1^{\infty}, p_2^{\infty}, \ldots, p_k^{\infty})=h_k(p^{\infty})$.

For a rational number $\alpha$ and (not necessarily distinct) primes $p_1, \ldots, p_k$,
let $s_k(\alpha, p)=s(\alpha, p_1, \ldots, p_k)$ be the infimum of numbers
of the form $\alpha-h_k(p^e)$ with $e_1, \ldots, e_k$ such that
$h_l(p^e)=\alpha$ for some (not necessarily distinct) primes
$p_{k+1}, p_{k+2}, \ldots, p_l$ and exponents $e_{k+1}, e_{k+2}, \ldots, e_l$.
Moreover, let $s(\alpha; k)$ be the infimum of $s_k(\alpha, p)$
with $p_1, \ldots, p_k$ running over all primes.

We shall prove that $s(\alpha; k)$ can be bounded from below by an effectively
computable positive constant depending only on $\alpha$ and $k$.
This result is essentially included in Theorem 4.2 of \cite{Pom2}.
But we reproduce the proof of this lemma since our lemma allows duplication of primes and,
as Pomerance notes in p. 204 in \cite{Pom2}, the proof can be much shortened
when $p_1, p_2, \ldots, p_k$ are all odd.

\begin{lem}\label{lma1}
For any rational number $\alpha=n/d>1$ and primes $p_1, p_2, \ldots, p_k$,
we have $s_k(n/d, p)>\delta_k(n, p)$,
where $\delta_k(n, p)=\delta(n, p_1, p_2, \ldots, p_k)$ is an effectively computable
positive constant depending only on $p_1, p_2, \ldots, p_k$ and $n$.
\end{lem}

\begin{proof}
For $k=0$, we clearly have $s(\alpha)=\alpha-1\geq 1/d>1/n$.
Now we shall give a lower bound for $s_k(\alpha, p)$ in terms of
$\alpha, p_1, p_2, \ldots, p_k$ and $s_{k-1}(\alpha, p)$.
This inductively prove the lemma.

We first see that $h_k(p^\infty)\neq\alpha$.
Indeed, the denominator of $h_k(p^\infty)$ is even
while the denominator of $\alpha=h_k(p^e)$ must be odd.
So that it suffices to deal two cases $h_k(p^\infty)<\alpha$
and $h_k(p^\infty)>\alpha$.

In the former case, we see that $\alpha-h_k(p^e)
>\alpha-h_k(p^\infty)\geq 1/(d\prod_i(p_i-1))>1/(n\prod_i(p_i-1))$.
Thus we have $s_k(n/d, p)\geq 1/(n\prod_i(p_i-1))$
in this case.

In the latter case, letting $x_j=\floor{\log(2kn\prod_i(p_i-1))/\log p_j}$,
we see that $e_j<x_j$ for some $j$ since
\begin{equation}
\begin{split}
h_k(p^x)= & \prod_i h_k(p^\infty)\prod_i\left(1-\frac{1}{p_i^{x_i+1}}\right) \\
\geq & \prod_i h_k(p^\infty)\left(1-\sum_i\frac{1}{p_i^{x_i+1}}\right) \\
> & \left(\alpha+\frac{1}{d\prod_i(p_i-1)}\right)\left(1-\frac{1}{2n\prod_i(p_i-1)}\right) \\
= & \alpha\left(1+\frac{1}{n\prod_i(p_i-1)}\right)\left(1-\frac{1}{2n\prod_i(p_i-1)}\right) \\
> & \alpha. \\
\end{split}
\end{equation}

In this case, we have $s_k(\alpha, p)\geq \inf\{s(\alpha/h(p_i^{e_i}), p_1, p_2, \ldots, \hat{p_i}, p_k)|1\leq i\leq k, 1\leq e_j<x_j (1\leq j\leq k, j\neq i) \}$.
Observing that the reduced numerator of $\alpha/h(p_i^{e_i})$ divides $np_i^{e_i}$,
we can take $\delta_k(n, p)=\min\{1/(n\prod_i(p_i-1)), \inf\{\delta(np_i^{e_i}, p_1, p_2, \ldots, \hat{p_i}, p_k)|1\leq i\leq k, 1\leq e_j<x_j (1\leq j\leq k)\}\}$.
By induction, this completes the proof.
\end{proof}

\begin{lem}\label{lmap}
Let $n, d$ be integers with $d$ odd, $p_1, \ldots, p_s$ any odd (not necessarily distinct) primes
and $e_1, \ldots, e_s$ non-negative integers.
Assume that $h(p_1^{e_1}, p_2^{e_2}, \ldots, p_s^{e_s})<n/d$ but
$h(p_1^{e_1}, p_2^{e_2}, \ldots, p_l^{e_l})=n/d$ for some (not necessarily distinct) primes
$p_{s+1}, p_{s+2}, \ldots, p_l$ and positive integral exponents $e_{s+1}, e_{s+2}, \ldots, e_l$.
Then the inequality
\begin{equation}
\frac{n}{d}-\prod_{i=1}^{s} h(p_i^{e_i})>C_{18}(s, n)
\end{equation}
holds for effectively computable constants $C_{18}(s, n)$ depending only on $s, n$.
\end{lem}

\begin{proof}
For $s=0$, we have a trivial estimate $C_{18}(0, n)\geq n/(n-1)-1>1/n$.

Next we shall show that we can compute $C_{18}(s+1, n)$ in term of $C_{18}(s, n)$.
This gives the lemma by induction.  If
\begin{equation}\label{eq61}
p_i>\frac{2n}{dC_{18}(s, n)}-1
\end{equation}
for some $i$, then we have
\begin{equation}
h(p_i^{e_i})<\left(1-\frac{1}{p_i}\right)^{-1}<\left(1-\frac{\frac{C_{18}(s, n)}{2}}{\frac{n}{d}-\frac{C_{18}(s, n)}{2}}\right)^{-1}=\frac{\frac{n}{d}-\frac{C_{18}(s, n)}{2}}{\frac{n}{d}-C_{18}(s, n)}
\end{equation}
and therefore the inductive hypothesis yields that
\begin{equation}
\prod_{i=1}^{s+1}h(p_i^{e_i})\leq h(p_i^{e_i})\left(\frac{n}{d}-C_{18}(s, n)\right)<\frac{n}{d}-\frac{C_{18}(s, n)}{2}.
\end{equation}
If \eqref{eq61} does not hold for any $i$, then we have 
\begin{equation}
\prod_{i=1}^{s+1}h(p_i^{e_i})<\frac{n}{d}-\min\delta(n, p_1, \ldots, p_{s+1}),
\end{equation}
where $p_1, \ldots, p_{s+1}$ run all primes below $2n/(dC_{18}(s, n))$.
Hence, we have
\begin{equation}
\frac{n}{d}-\prod_{i=1}^{s+1}h(p_i^{e_i})>\min \left\{\frac{C_{18}(s, n)}{2}, \bar\delta(s, n)\right\},
\end{equation}
where $\bar\delta(s, n)$ denotes the minimum value of $\delta(n, p_1, \ldots, p_{s+1})$
with $p_i\leq 2n/C_{18}(s, n)$.
Now, Lemma \ref{lma1} ensures that $C_{18}(s+1, n)$ is positive and effectively computable.
\end{proof}

\section{Proof of Theorem {\ref{th12}}}

First we shall show that $M$ must have a prime factor smaller than $C_1(k)$.
Let $T$ be the same set as defined in Section \ref{Pr11}.
By Theorem \ref{th0}, $T$ contains at most $k(k+1)/2$ primes.
Since $N$ is odd, we can apply Lemma \ref{lmap} taking $p_1, p_2, \ldots, p_s$ to be the primes in $T$
and $p_{s+1}, \ldots, p_l$ to be the prime factors of $MN$ not in $T$, with primes dividing both $M$ and $N$
counted doubly, to obtain $\prod_{p\in T}h(p)<ab-C_{19}(k, ab)$.
Hence, $\prod_{p\mid MN, p\not\in T}h(p)>1+C_{20}(k, ab)$, implying
\begin{equation}
\sum_{p\mid MN, p\not\in T}\frac{1}{p}>C_{21}(k, ab).
\end{equation}
But, as in the proof of Theorem \ref{th11}, we have
\begin{equation}
\sum_{p\mid N, p\not\in T}\frac{1}{p}<\exp \left(-C_{14}(k)\left(\frac{\log q_1}{\log\log q_1}\right)^\frac{1}{2(k+1)}\right).
\end{equation}
Hence, observing that $\omega(M)\leq \omega(\sigma(N))\leq k$, we have $q_1<C_{22}(k, ab)$.
Since $a, b<C_0(k)$, we have $q_1<C_1(k)$.

Next we shall show that $N$ must have a prime factor smaller than $C_1(k)$.
$X$ shall denote the smallest prime factor of $N$.
Let $Q$ be an arbitrary real number which shall be chosen later
and $s$ be the index satisfying $q_s\leq Q<q_{s+1}$.
Similarly to \eqref{eqs1}, there exist at most $r$ primes below $C_4(r, q_r)$ in $S_r$ and
we have
\begin{equation}
\begin{split}
\sum_{r=s+1}^k\sum_{\substack{p\in S_r,\\ p\geq C_4(r, q_r)}}\frac{1}{p}< & \exp \left(-C_5(r)\left(\frac{\log q_r}{\log\log q_r}\right)^\frac{1}{2(r+1)}\right) \\
< & \exp \left(-C_5(r)\left(\frac{\log Q}{\log\log Q}\right)^\frac{1}{2(r+1)}\right).
\end{split}
\end{equation}
Hence, for any real $X$, we obtain
\begin{equation}
\sum_{r=s+1}^k\sum_{p\in S_r, p\geq X}\frac{1}{p}<\frac{k(k+1)}{2X}+\exp \left(-C_5(k)\left(\frac{\log Q}{\log\log Q}\right)^\frac{1}{2(k+1)}\right).
\end{equation}

Since $q_s\leq Q$, Lemma \ref{lmdi} gives that
\begin{equation}
\sum_{p\in S_r, p\geq X}\frac{1}{p}<\frac{(\log Q\log X)^r}{X}
\end{equation}
for each $r\leq s$.

Hence, we have
\begin{equation}
\begin{split}
\sum_{p\mid N}\frac{1}{p}< & \frac{s(\log Q\log X)^s}{X}+\frac{k(k+1)}{2X} \\
& +\exp \left(-C_5(k)\left(\frac{\log Q}{\log\log Q}\right)^\frac{1}{2(k+1)}\right).
\end{split}
\end{equation}
Taking $Q$ so that $C_5(k)(\log Q/\log\log Q)^{1/2(k+1)}=\log X$, we have
\begin{equation}
\sum_{p\mid N}\frac{1}{p}<\frac{C_{23}(k)\log^{2k^2+k} X}{X}.
\end{equation}

However, since $\sigma(M)\sigma(N)/MN=ab$ and $M$ is odd with $\omega(M)\leq k$,
Lemma \ref{lmap} gives that $\sigma(M)/M<ab-C_{18}(k, ab)$ and therefore $\sigma(N)/N>1+C(k, ab)$, implying that
\begin{equation}
\sum_{p\mid N}\frac{1}{p}>C_{24}(k, ab).
\end{equation}
Hence, we must have $X<C_{25}(k, ab)$.  Since $a, b<C_0(k)$ by Theorem \ref{th11},
we have $X<C_1(k)$, which proves Theorem \ref{th12}.

\section{Proof of Theorem {\ref{th13}}}

First we shall show that $q_s<C_{26}(a, b, k, k^\prime, s)$ by induction.
The inductive base is that $q_1<C_1(k)$, which is the former part of Theorem \ref{th12}.
Now, it suffices to prove that for any positive integer $s\leq k-1$ we have $q_{s+1}<C_{26}(a, b, k, k^\prime, s+1)$
under the assumption that $q_1, \ldots, q_s<C_{26}(a, b, k, k^\prime, s)$.

We see that for each $r\leq s$, $S_r$ contains at most a bounded number of primes.
Each $S_{r, l}$ with $l\geq 3$ contains at most $C_{27}(r, l)$ primes by two theorems of
Evertse\cite{Ev1}\cite{Ev2}.
Since $q_1, q_2, \ldots, q_s\leq C_{26}(a, b, k, k^\prime, s)$ by the inductive hypothesis,
we see that $l\leq q_s$ is also bounded by $C_{28}(a, b, k, k^\prime, s)$.
By assumption, for each $r\leq s$, except at most $k^\prime$ primes, any prime in $S_r$ belongs to
some $S_{r, l}$ for some prime power $l\geq 3$.
Hence, each $S_r$ with $r\leq s$ contains at most $C_{29}(a, b, k, k^\prime, r)$ primes.
We note that, by virtue of the inductive assumption that $q_1, q_2, \ldots, q_s\leq C_{26}(a, b, k, k^\prime, s)$,
we can also use classical finiteness results such as Bugeaud and Gy\H{o}ry\cite{BG},
Coates\cite{Coa} and Kotov\cite{Kot}.

Moreover, by Theorem \ref{th0}, for $r>s$, $S_r$ contains at most $r$ prime factors below
$C_4(r, q_r)$.

Now, let $U_s$ be the set of prime factors $p$ dividing $N$ at least twice
for which $p\geq C_4(r, q_r)$ and $p\in S_r$ for some $r>s$.
It follows from the above observations that there exist at most $C_{30}(a, b, k, k^\prime, s)+s(s+1)/2+k+k^\prime$ primes
outside $U_s$ dividing $MN$ and therefore Lemma \ref{lmap} yields that
\begin{equation}\label{eq81}
\sum_{p\in U_s}\frac{1}{p}=\sum_{r=s+1}^k \sum_{\substack{p\in S_r,\\ p\geq C_4(r, q_r)}}\frac{1}{p}>C_{31}(a, b, k, k^\prime, s).
\end{equation}

However, \eqref{eq0} gives that
\begin{equation}
\sum_{\substack{p\in S_r,\\ p\geq C_4(r, q_r)}}\frac{1}{p}< \exp \left(-C_5(r)\left(\frac{\log q_r}{\log\log q_r}\right)^\frac{1}{2(r+1)}\right)
\end{equation}
and therefore
\begin{equation}\label{eq82}
\begin{split}
\sum_{p\in U_s}\frac{1}{p}= & \sum_{r=s+1}^k \sum_{\substack{p\in S_r,\\ p\geq C_4(r, q_r)}}\frac{1}{p} \\
< & \exp \left(-C_{32}(k)\left(\frac{\log q_{s+1}}{\log\log q_{s+1}}\right)^\frac{1}{2(k+1)}\right).
\end{split}
\end{equation}

In order that both \eqref{eq81} and \eqref{eq82} simultaneously hold, we must have $q_{s+1}<C_{26}(a, b, k, k^\prime, s+1)$,
which completes our inductive argument to prove that $q_j<C_{26}(a, b, k, k^\prime, j)$ for every $j=1, 2, \ldots, k$.

Now, by virtue of Lemma \ref{lm22},
\begin{equation}
\frac{p^l-1}{p-1}=q_1^{e_1}q_2^{e_2}\cdots q_k^{e_k}
\end{equation}
implies that $l<q_k<C_{26}(a, b, k, k^\prime, j)$ and, using
classical finiteness results such as Bugeaud and Gy\H{o}ry\cite{BG}, Coates\cite{Coa} and Kotov\cite{Kot},
we finally obtain $p<C_{33}(l, q_1, q_2, \ldots, q_k)<C_{34}(a, b, k, k^\prime)$.
This proves the theorem.

\section{Concluding remarks}
Our proof of Theorem \ref{th13} exhibited in the last section indicates that
we can explicitly give the upper bound for $N$ in terms of $a, b, k, k^\prime$;
although Evertse's results\cite{Ev1}\cite{Ev2} are not effective for the {\em size} of solutions,
these results give an effective upper bounds for the {\em number} of solutions.
However, the upper bound which our proof yields would become considerably large
due to its inductive nature exhibited in the last section.  For sufficiently large $k$,
our proof yields that
\begin{equation}
N<\exp\exp\ldots\exp (a+b+k+k^\prime),
\end{equation}
where the number of iterations of the exponential function is $\ll k$ and $\gg k$.

{}
\end{document}